\newtheorem{thm}{Theorem}[section]
\newtheorem{lemma}[thm]{Lemma}
\newtheorem{prop}[thm]{Proposition}
\theoremstyle{definition}
\newtheorem{example}[thm]{Example}
\newtheorem{remark}[thm]{Remark}
\newtheorem{definition}[thm]{Definition}
\numberwithin{equation}{section}
\long\def\blankfootnotetext#1{\begingroup\def\thefootnote{\fnsymbol{footnote}}\footnotetext{#1}\endgroup}
\newcommand{\orig}{\mathbf{0}}
\newcommand{\Z}{\mathbb{Z}}
\newcommand{\Q}{\mathbb{Q}}
\newcommand{\R}{\mathbb{R}}
\newcommand{\Proj}{\mathbb{P}}
\newcommand{\Hom}[1]{\mathrm{Hom}\mleft({#1}\mright)}
\newcommand{\abs}[1]{\left\vert{#1}\right\vert}
\newcommand{\mult}[1]{\mathrm{mult}\mleft({#1}\mright)}
\newcommand{\intr}[1]{\mathrm{int}\mleft({#1}\mright)}
\newcommand{\V}[1]{\mathrm{vert}\mleft({#1}\mright)}
\renewcommand{\gcd}[1]{\mathrm{gcd}\mleft\{{#1}\mright\}}
\renewcommand{\min}[1]{\mathrm{min}\mleft\{{#1}\mright\}}
\renewcommand{\max}[1]{\mathrm{max}\mleft\{{#1}\mright\}}
\newcommand{\dual}[1]{{#1}^\vee}
\newcommand{\bdual}[1]{\dual{\mleft({#1}\mright)}}
\newcommand{\conv}[1]{\mathrm{conv}\mleft({#1}\mright)}
\newcommand{\sconv}[1]{\mathrm{conv}\mleft\{{#1}\mright\}}
\newcommand{\scone}[1]{\mathrm{cone}\mleft\{{#1}\mright\}}
\newcommand{\linspan}[1]{\left<{#1}\right>}
\newcommand{\lineseg}[2]{\overline{{#1}{#2}}}
\newcommand{\hmin}{{h_\mathrm{min}}}
\newcommand{\hmax}{{h_\mathrm{max}}}
\newcommand{\umin}{{u_\mathrm{min}}}
\newcommand{\mut}{\mathrm{mut}}
\newcommand{\Mm}{M^-}
\newcommand{\Mp}{M^+}
\newcommand{\NQ}{N_\Q}
\newcommand{\MQ}{M_\Q}
\newcommand{\modb}[1]{\left(\mathrm{mod}\ {#1}\right)}
\begin{document}
\author[M.~E.~Akhtar]{Mohammad E.~Akhtar}
\author[A.~M.~Kasprzyk]{Alexander M.~Kasprzyk}
\address{Department of Mathematics\\Imperial College London\\London, SW$7$\ $2$AZ\\UK}
\email{mohammad.akhtar03@imperial.ac.uk}
\email{a.m.kasprzyk@imperial.ac.uk}
\blankfootnotetext{2010 \emph{Mathematics Subject Classification}: 52B20 (Primary); 14J45, 11D99 (Secondary).}
\title{Mutations of fake weighted projective planes}
\begin{abstract}
In previous work by Coates, Galkin, and the authors, the notion of mutation between lattice polytopes was introduced. Such a mutation gives rise to a deformation between the corresponding toric varieties. In this paper we study one-step mutations that correspond to deformations between weighted projective planes, giving a complete characterisation of such mutations in terms of $T$-singularities. We show also that the weights involved satisfy Diophantine equations, generalising results of Hacking--Prokhorov.
\end{abstract}
\maketitle
\section{Introduction}
In~\cite{ACGK12} we described a combinatorial notion of mutation between convex lattice polytopes. In this paper we begin to explore the geometry behind this idea. Given a convex lattice polytope $P$ containing the origin and with primitive vertices, there is a corresponding toric variety $X$ defined by the spanning fan of $P$. A mutation between polytopes $P$ and $Q$ determines a deformation between $X_P$ and $X_Q$~\cite{Ilt12}. Our main result characterises mutations between triangles; thus we characterise certain deformations, over $\Proj^1$, with fibers given by fake weighted projective planes. We recover and generalise certain results of Hacking and Prokhorov~\cite[Theorem~4.1]{HP10} connecting the fake weighted projective planes with $T$-singularities to solutions of Markov-type equations. We prove the following:

\begin{prop}\label{prop:only_mutate_T_sings}
Let $X=\Proj(\lambda_0,\lambda_1,\lambda_2)$ be a weighted projective plane. Up to reordering of the weights, there exists a one-step mutation to a weighted projective plane $Y$ if and only if $\frac{1}{\lambda_0}(\lambda_1,\lambda_2)$ is a $T$-singularity. When this is the case, $Y=\Proj\left(\lambda_1,\lambda_2,\frac{(\lambda_1+\lambda_2)^2}{\lambda_0}\right)$. More generally, there exists a one-step mutation from the fake weighted projective plane $X/(\Z/n)$ to the fake weighted projective plane $Y/(\Z/n')$ only if $n=n'$ and $\frac{1}{\lambda_0}(\lambda_1,\lambda_2)$ is a $T$-singularity.
\end{prop}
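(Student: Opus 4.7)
My plan is to work throughout with the associated lattice triangles. A fake weighted projective plane $X/(\Z/n)$ arises as the toric variety of the spanning fan of a lattice triangle $P = \sconv{v_0, v_1, v_2}$ with primitive vertices $v_i \in N$ and $\orig \in \intr{P}$; the positive weights $\lambda_i$ are determined by the relation $\sum_i \lambda_i v_i = \orig$, and the quotient index is $n = [N : \linspan{v_0, v_1, v_2}]$. A one-step mutation between such varieties translates, under this dictionary, to a mutation of triangles from $P$ to a triangle $Q$, specified by a primitive form $w \in M$ and a lattice factor $F$ supported in $\ker w$.

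Next, I would analyse when such a mutation of a triangle produces another triangle. Up to reordering the vertices, there is a distinguished vertex --- call it $v_0$ --- whose edge star is altered by the mutation, while $v_1$ and $v_2$ survive as vertices of $Q$. The compatibility requirement forces $w(v_1) = w(v_2)$ and $F$ parallel to $v_2 - v_1$, so $w$ is essentially determined by the edge $\lineseg{v_1}{v_2}$. In adapted coordinates the primitive edge vectors at $v_0$, the lattice length of $F$, and the height $|w(v_0)|$ become explicitly expressible in terms of $\lambda_0, \lambda_1, \lambda_2$ via the weight identity $\lambda_0 v_0 + \lambda_1 v_1 + \lambda_2 v_2 = \orig$. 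The Minkowski-summand condition required for the mutation to exist then becomes an arithmetic condition which I expect to collapse exactly to $\lambda_0 \mid (\lambda_1 + \lambda_2)^2$ --- precisely the T-singularity condition on $\frac{1}{\lambda_0}(\lambda_1, \lambda_2)$.

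For the explicit form of $Y$, I would compute the new vertex $v_0'$ of $Q$ directly from the mutation rule; the new weight relation $\lambda_1 v_1 + \lambda_2 v_2 + \mu v_0' = \orig$, combined with the original dependence, then forces $\mu = (\lambda_1 + \lambda_2)^2/\lambda_0$. In the fake case, the mutation is a piecewise-linear map defined on the ambient lattice $N$, so it preserves the sublattice $\linspan{v_0, v_1, v_2}$ and its index $n$; hence $n = n'$. The most delicate step will be the second paragraph: verifying that the Minkowski-summand compatibility across all the intermediate slices of $P$ --- not merely at extremal heights --- reduces cleanly to the single divisibility $\lambda_0 \mid (\lambda_1 + \lambda_2)^2$ and does not impose stronger, potentially spurious, arithmetic constraints.
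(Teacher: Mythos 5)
Your overall strategy (work on triangles, identify the mutation with an edge of the triangle, extract the divisibility $\lambda_0\divides(\lambda_1+\lambda_2)^2$, compute the new weight from a linear relation) is broadly the paper's strategy for the first two claims, but two of your key steps do not hold as stated. The decisive gap is your argument for $n=n'$: a combinatorial mutation is \emph{not} induced by a piecewise-linear map of $\NQ$ defined on the ambient lattice $N$. The piecewise-linear transformation $\varphi$ attached to a mutation acts on the dual space $\MQ$, carrying $\dual{P}$ to $\dual{Q}$; on the $N$-side the polytope is cut into Minkowski summands and reassembled, vertices are created and destroyed, and there is no ambient piecewise-$GL_2(\Z)$ map matching $\V{P}$ with $\V{Q}$ (in Example~\ref{eg:P2_to_P114} the vertex $(1,-1)$ of $P$ disappears and $(1,2)$ appears). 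So ``the mutation preserves the sublattice $\linspan{v_0,v_1,v_2}$ and its index'' is unsupported, and $\mult{X}=[N:\linspan{v_0,v_1,v_2}]$ is exactly the quantity at stake. The paper instead proves $\mult{X}=\mult{Y}$ (Proposition~\ref{prop:mult_preserved}) by combining two invariances: the degree $(-K)^2$ is preserved by any mutation, and the quantity $(\lambda_0+\lambda_1+\lambda_2)^2/(\lambda_0\lambda_1\lambda_2)=\mult{X}(-K_X)^2$ is checked, using the weight transformation of Proposition~\ref{prop:weights_transform}, to be unchanged; dividing gives $n=n'$. You would need an argument of this type, or a direct determinant/gcd computation on the two vertex triples.

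Second, your vertex bookkeeping is internally inconsistent, and this infects the weight computation. If $w(v_1)=w(v_2)$ and $F$ is parallel to $v_2-v_1$, then $\lineseg{v_1}{v_2}$ is the edge at height $\hmin$ and it is the edge that \emph{contracts}: exactly one of $v_1,v_2$ survives, while the vertex whose star is altered (expanded into the new edge) is $v_0$. Hence your relation $\lambda_1v_1+\lambda_2v_2+\mu v_0'=\orig$ is not the relation satisfied by $\V{Q}$; the correct computation (done dually in the paper, equation~\eqref{eq:barycentre_mutation}) first produces weights proportional to $\left(\lambda_0\lambda_1,\lambda_0\lambda_2,(\lambda_1+\lambda_2)^2\right)$, and $\lambda_0\divides(\lambda_1+\lambda_2)^2$ is then forced by \emph{well-formedness} of the target, not directly by existence of the mutation. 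Two smaller points: (i) the equivalence between $r\divides(a+b)^2$ and $\frac{1}{r}(a,b)$ being a T-singularity is not immediate from the Koll\'ar--Shepherd-Barron normal form $\frac{1}{nd^2}(1,dna-1)$ and needs the argument of Lemma~\ref{lem:T_singularities}; (ii) in the fake case the existence criterion lives on the actual cone singularity $\frac{1}{r}(a,b)$ with $r=n\lambda_0$, so your expectation that everything ``collapses exactly to $\lambda_0\divides(\lambda_1+\lambda_2)^2$'' is true only when $n=1$ --- Examples~\ref{ex:not_sufficient} and~\ref{ex:not_T_singularities} (weights $(1,2,3)$, $\mult{X}=5$) show the divisibility is necessary but not sufficient, which is precisely why the proposition asserts only ``only if'' there. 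By contrast, the worry in your final paragraph is harmless: for a triangle the only negative height carrying vertices is $\hmin$, and all intermediate $G_h$ may be taken empty, so the factor condition reduces to the single slice at $\hmin$.
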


In Proposition~\ref{prop:mult_preserved} we associate to a weighted projective plane $X$ a Diophantine equation
\begin{equation}\label{eq:intro_diophantine}
mx_0x_1x_2 = k(c_0x_0^2 + c_1x_1^2 + c_2x_2^2).
\end{equation}
The weights $(\lambda_0,\lambda_1,\lambda_2)$ of $X$ correspond to a
solution $(a_0,a_1,a_2)$, where $\lambda_i=c_ia_i^2$, $i=0,1,2$, and
the degree of $X$ is given by
$$(-K_X)^2=\frac{m^2}{c_0c_1c_2k^2}.$$
One-step mutations of $X$ correspond to transformations of the solutions to~\eqref{eq:intro_diophantine}, and all such solutions can be generated from the so-called minimal weights by mutation.

When $X=\Proj^2$, equation~\eqref{eq:intro_diophantine} becomes the celebrated Markov equation~\cite{Mar88}. Certain other special cases were studied by Rosenberger~\cite{Ros79}. These cases all have finitely many minimal weights. In~\S\ref{sec:infinite_min_weights} we give an example where the corresponding Diophantine equation has infinitely many minimal weights.

\section{Mutations of Fano polytopes}\label{sec:mutations_review}
Let $N\cong\Z^n$ be a lattice with dual $M:=\Hom{N,\Z}$. A lattice polytope $P\subset\NQ:=N\otimes_\Z\Q$ is called \emph{Fano} if it satisfies three conditions:
\begin{enumerate}
\item
$P$ is of maximum dimension, $\dim{P}=\dim{N}$;
\item\label{defn:fano_orign}
The origin is contained in the strict interior of $P$, $\orig\in\intr{P}$;
\item
The vertices $\V{P}$ of $P$ are primitive lattice points, i.e.~for any $v\in\V{P}$ there are no other lattice points on the line segment $\lineseg{\orig}{v}$ joining $v$ and the origin.
\end{enumerate}
The dual of $P$ is defined to be the polyhedron
$$\dual{P}:=\{u\in\MQ\mid u(v)\geq -1\text{ for all }v\in P\}\subset\MQ.$$
By condition~\eqref{defn:fano_orign} this is a polytope with $\orig\in\intr{\dual{P}}$, although it need not be a lattice polytope. See~\cite{KN12} for an overview of Fano polytopes.

We briefly recall the notation of~\cite[\S3]{ACGK12}. Any choice of primitive vector $w\in M$ determines a lattice height function $w:N\rightarrow\Z$ which naturally extends to $\NQ\rightarrow\Q$. A subset $S\subset\NQ$ is said to lie at height $h\in\Q$ with respect to $w$ if $w(S):=\{w(s)\mid s\in S\}=\{h\}$; we write $w(S)=h$. The set of all points of $\NQ$ lying at height $h$ with respect to a given $w$ is an affine hyperplane $H_{w,h}:=\{v\in\NQ\mid w(v)=h\}$. In particular,
$$w_h(P):=\conv{H_{w,h}\cap P\cap N}\subset\NQ$$
will denote the (possibly empty) convex hull of all lattice points in $P$ at height $h$.

Define
$$\hmin:=\min{w(v)\mid v\in P},\qquad\hmax:=\max{w(v)\mid v\in P}.$$
Since $P$ is a lattice polytope, both $\hmin$ and $\hmax$ are integers. Condition~\eqref{defn:fano_orign} guarantees that $\hmin<0$ and $\hmax>0$.

\begin{definition}\label{defn:factor}
A \emph{factor} of $P$ with respect to $w$ is a lattice polytope $F\subset\NQ$ satisfying:
\begin{enumerate}
\item
$w(F) = 0$;
\item
For every integer $h$, $\hmin\leq h<0$, there exists a (possibly empty) lattice polytope $G_h\subset\NQ$ at height $h$ such that
$$H_{w,h}\cap\V{P}\subseteq G_h+(-h)F\subseteq w_h(P).$$
\end{enumerate}
\end{definition}

Note that, for given polytope $P\subset\NQ$ and width vector $w\in M$, a factor $F$ need not exist. When a factor does exist we make the following construction:

\begin{definition}[\protect{\cite[Definition~5]{ACGK12}}]\label{defn:mutation}
Let $P\subset\NQ$ be a polytope with width vector $w\in M$, factor $F$, and polytopes $\{G_h\}$. We define the corresponding \emph{combinatorial mutation} to be the convex lattice polytope
$$\mut_w(P,F;\{G_h\}):=\conv{\bigcup_{h=\hmin}^{-1}G_h\cup\bigcup_{h = 0}^\hmax(w_h(P) + hF)}\subset\NQ.$$
For brevity we will refer to a combinatorial mutation simply as a \emph{mutation}.
\end{definition}

\noindent
We summarise the key properties of mutation~\cite{ACGK12}:
\begin{enumerate}
\item
Since for any $v\in N$ such that $w(v) = 0$ we have that
$$\mut_w(P,F;\{G_h\})\cong\mut_w(P, v+F;\{G_h + hv\}),$$
we need only consider factors $F$ up to translation. In particular, choosing $F$ to be a point leaves $P$ unchanged (up to isomorphism).
\item
If $\{G_h\}$ and $\{G_h^{\prime}\}$ are any two collections of polytopes for a factor $F$, then
$$\mut_w(P,F;\{G_h\})\cong\mut_w(P,F;\{G_h^{\prime}\}).$$
Thus the choice of collection $\{G_h\}$ is irrelevant and we write $\mut_w(P,F)$.
\item
$P$ is a Fano polytope if and only if $\mut_w(P,F)$ is a Fano polytope.
\item
Let $Q:=\mut_w(P,F)$. Then $\mut_{-w}(Q,F)=P$, so mutations are invertible.
\end{enumerate}
In~\cite{ACGK12} it was also shown that mutations have a natural description as a piecewise linear transformation of the lattice $M$. We require the following definition.

\begin{definition}
The \emph{inner normal fan} in $M$ of a polytope $F\subset\NQ$ is generated by the cones $\sigma_{v_F}$ consisting of those linear functions which are minimal on a given vertex $v_F$ of $F$. That is,
$$\sigma_{v_F}:=\left\{u\in\MQ\mid u(v_F)=\min{u(v')\mid v'\in F}\right\}.$$
\end{definition}

\begin{enumerate}
\setcounter{enumi}{4}
\item\label{item:mutation_in_M}
A mutation of $P\subset\NQ$ induces a piecewise linear transformation $\varphi$ of $\MQ$ such that $\bdual{\varphi(\dual{P})}=\mut_w(P,F)$, given by
$$\varphi:u\mapsto u -\umin w,\qquad u\in\MQ,$$
where $\umin:=\min{u(v_F)\mid v_F\in\V{F}}$. The inner normal fan of $F\subset\NQ$ determines a chamber decomposition of $\MQ$, and $\varphi$ acts as a linear transformation on the interior of each maximal dimensional cone of this fan.
\item\label{item:degree_preserved}
As a consequence of~\eqref{item:mutation_in_M}, the toric varieties $X_P$ and $X_Q$ defined by the spanning fans of $P$ and $Q:=\mut_w(P,F)$ have the same degree (in fact they have the same Hilbert series).
\end{enumerate}

\begin{example}\label{eg:P2_to_P114}
Consider the triangle $P=\sconv{(1,-1),(-1,2),(0,-1)}\subset\NQ$ corresponding to the toric variety $\Proj^2$. Let $w=(0,1)\in M$ and set $F=\sconv{\orig,(1,0)}\subset\NQ$. This defines a mutation from $P$ to the triangle $Q=\sconv{(1,2),(-1,2),(0,-1)}\subset\NQ$, as illustrated in Figure~\ref{fig:P2_to_P114}. On the dual side, this corresponds to a piecewise linear map $\varphi:u\mapsto uM_\sigma$ for $u=(\alpha,\beta)\in\MQ$, where
$$
M_\sigma=\left\{
\begin{array}{ll}
\small\begin{pmatrix}
1&0\\0&1
\end{pmatrix}&\text{ if }\alpha\geq 0,\\
\small\begin{pmatrix}
1&-1\\0&1
\end{pmatrix}&\text{ otherwise.}
\end{array}
\right.
$$
In particular, $\varphi(\dual{P})=\dual{Q}$.
\end{example}

\begin{figure}[tb]
$$
\begin{array}{rccc}
\NQ:&
\begin{minipage}[c]{38pt}\includegraphics{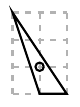}\end{minipage}&
\longmapsto&
\begin{minipage}[c]{38pt}\includegraphics{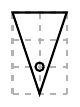}\end{minipage}\\
\MQ:&
\begin{minipage}[c]{90pt}\includegraphics{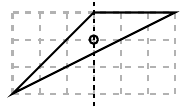}\end{minipage}&
\longmapsto&
\begin{minipage}[c]{90pt}\includegraphics{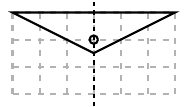}\end{minipage}
\end{array}
$$
\caption{A mutation from the triangle associated with $\Proj^2$ to the triangle associated with $\Proj(1,1,4)$.}\label{fig:P2_to_P114}
\end{figure}

Mutations are particularly simple in the two-dimensional case. In this setting, $w\in M$ defines a non-trivial mutation of $P\subset\NQ$ if and only if $w\in\{\overline{u}\mid u\in\V{\dual{P}}\}\subset M$, where $\overline{u}\in M$ is the unique primitive lattice vector on the ray passing through $u$. Nontrivial factors $F\subset\NQ$ are just line segments, so it suffices to restrict attention to those $F$ which have vertex set $\{\orig,f\}$, for some $f\in N$ with $w(f) = 0$. The inner normal fan of any factor $F$ of $P$ with respect to a given $w$ is just the linear subspace of $\MQ$ spanned by $w$. This divides $\MQ$ into two chambers; the piecewise linear transformation $\varphi$ acts trivially in one of the chambers, and as $u\mapsto u-u(f)w$ in the other.

\section{One-step mutations of triangles}\label{sec:mutation_results}
Set $N\cong\Z^2$ and let $P:=\sconv{v_0,v_1,v_2}\subset\NQ$ be a Fano triangle. Since $\orig\in\intr{P}$ there exists a (unique) choice of coprime positive integers $\lambda_0,\lambda_1,\lambda_2\in\Z_{>0}$ with $\lambda_0v_0+\lambda_1v_1+\lambda_2v_2=\orig$. The  projective toric surface $X$ given by the spanning fan of $P$ has Picard rank $1$, and is called a \emph{fake weighted projective plane} with weights $(\lambda_0,\lambda_1,\lambda_2)$; $X$ is the quotient of $\Proj(\lambda_0,\lambda_1,\lambda_2)$ by the action of a finite group of order $\mult{X}$ acting freely in codimension one~\cite{Con02,Buc08,Kas08b}.

\begin{remark}
Since the vertices of $P$ are primitive, the weights $(\lambda_0,\lambda_1,\lambda_2)$ are \emph{well-formed}: that is, $\gcd{\lambda_i,\lambda_j}=1$, $i\neq j$. In this paper we will always require that weights are well-formed.
\end{remark}

\begin{definition}\label{defn:onestep_mutation}
We say that a fake weighted projective plane $Y$ with defining Fano triangle $Q\subset\NQ$ is obtained from $X$ by a \emph{one-step mutation} if $Q\cong\mut_w(P,F)$ for some choice of $w$ and factor $F$.
\end{definition}

\subsection{One-step mutations in $\MQ$ and weights}\label{subsec:weights}
\begin{figure}[tb]
\centering
\includegraphics[scale=0.9]{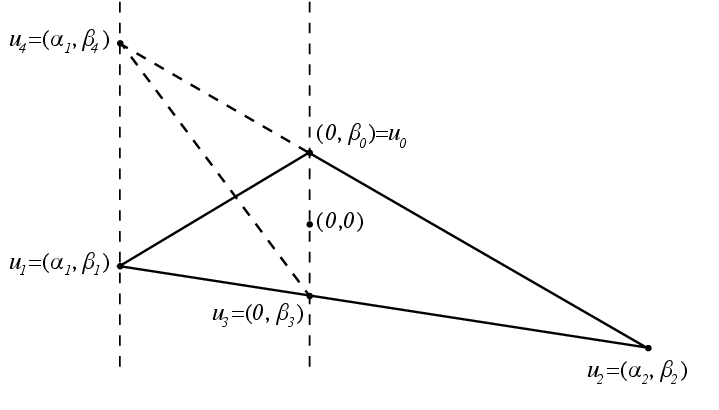}
\caption{A one-step mutation, depicted in $\MQ$, of the triangle $\sconv{u_0,u_1,u_2}$ to the triangle $\sconv{u_2,u_3,u_4}$.}\label{fig:mutation}
\end{figure}

First we address how the weights $(\lambda_0,\lambda_1,\lambda_2)$ associated with a Fano triangle $T\subset\NQ$ transform under mutation. We will require the following fact (see, for example,~\cite[Lemma~5.3]{Con02}): Let $\dual{T}=\sconv{u_0,u_1,u_2}$ by the triangle in $\MQ$ dual to $T$. Then, after possible reordering, $\lambda_0u_0+\lambda_1u_1+\lambda_2u_2=\orig$. Hence the weights of $T$ and the weights of $\dual{T}$ are equivalent.

\begin{prop}\label{prop:weights_transform}
Let $X$ be a fake weighted projective plane with weights $(\lambda_0,\lambda_1,\lambda_2)$. Suppose there exists a one-step mutation to a fake weighted projective plane $Y$. Then, up to relabelling, $\lambda_0\divides(\lambda_1+\lambda_2)^2$ and $Y$ has weights
$$\left(\lambda_1,\lambda_2,\frac{(\lambda_1+\lambda_2)^2}{\lambda_0}\right).$$
\end{prop}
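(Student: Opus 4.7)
The plan is to work in the primal lattice $N$, set up coordinates adapted to the mutation, and solve a small linear system for the weights of $Y$.

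\textbf{Setup.} Any primitive $w\in M$ defining a non-trivial mutation of $P$ lies on the ray through some vertex $u_0\in\V{\dual{P}}$, so after relabelling $v_0,v_1,v_2$ we may assume that $u_0$ is dual to the edge $[v_1,v_2]$. Writing $u_0=w/k$ for a unique $k\in\Z_{>0}$, we have $w(v_1)=w(v_2)=-k=\hmin$; applying $w$ to the relation $\lambda_0v_0+\lambda_1v_1+\lambda_2v_2=\orig$ gives $w(v_0)=k(\lambda_1+\lambda_2)/\lambda_0=:H=\hmax$. Choose coordinates on $N\cong\Z^2$ with $w=(0,1)$, and, after possibly swapping $v_1$ and $v_2$, with $v_i=(x_i,-k)$ and $x_1<x_2$. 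Put $d:=x_2-x_1\in\Z_{>0}$ and $e:=(1,0)$. The factor $F=[\orig,f]$ satisfies $w(f)=0$, hence $f=ae$ for some $a\in\Z_{>0}$.

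\textbf{Identifying $Q$.} The factor condition at height $-k$ reads
\[
\{v_1,v_2\}\subseteq G_{-k}+kF\subseteq w_{-k}(P)=[v_1,v_2].
\]
As the middle set is convex, contains both endpoints, and lies inside the segment, it equals $[v_1,v_2]$. Since $kF=[\orig,kae]$, this forces $G_{-k}$ to be a lattice segment in direction $e$ of length $d-ka\geq 0$. On the other side $w_H(P)=\{v_0\}$, so $w_H(P)+HF=[v_0,v_0+Hae]$ contributes two distinct vertices of $Q$ at height $H$. For $Q=\mut_w(P,F)$ to be a triangle, the single remaining vertex must lie at $\hmin=-k$, forcing $G_{-k}$ to be a point; i.e.\ $ak=d$ and $G_{-k}=\{v_1\}$. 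Hence
\[
Q=\sconv{v_1,\,v_0,\,v_0+Hae}.
\]

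\textbf{Computing the weights.} Let $(\mu_0,\mu_1,\mu_2)$ be the well-formed weights of $Y$. The relation $\mu_0v_1+\mu_1v_0+\mu_2(v_0+Hae)=\orig$ rearranges to $\mu_0v_1+(\mu_1+\mu_2)v_0+\mu_2Hae=\orig$. Using $v_2=v_1+de$, the Fano relation for $P$ gives $\lambda_0v_0=-(\lambda_1+\lambda_2)v_1-\lambda_2de$, while $\lambda_0Ha=(\lambda_1+\lambda_2)ka=(\lambda_1+\lambda_2)d$. Multiplying by $\lambda_0$ and separating the (linearly independent) $v_1$ and $e$ components yields
\[
\mu_0\lambda_0=(\mu_1+\mu_2)(\lambda_1+\lambda_2),\qquad\mu_2\lambda_1=\mu_1\lambda_2.
\]
The second equation, combined with $\gcd{\mu_1,\mu_2}=\gcd{\lambda_1,\lambda_2}=1$ and positivity, forces $\mu_1=\lambda_1$ and $\mu_2=\lambda_2$; the first then gives $\mu_0=(\lambda_1+\lambda_2)^2/\lambda_0$, so in particular $\lambda_0\divides(\lambda_1+\lambda_2)^2$. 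The remaining coprimality checks $\gcd{\mu_0,\mu_i}=1$ reduce quickly to $\gcd{\lambda_0,\lambda_i}=\gcd{\lambda_1,\lambda_2}=1$.

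\textbf{Main obstacle.} The principal step is deducing $ak=d$ from the factor axioms -- pinning down the combinatorial structure of $Q$ given that it is a triangle. Once the vertex list of $Q$ is in hand, everything else is a direct linear-algebraic computation.
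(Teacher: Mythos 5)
Your argument is correct, and it takes a genuinely different route from the paper's. The paper proves this proposition entirely on the dual side: it tracks the vertices $u_0,u_1,u_2$ of $\dual{T_1}\subset\MQ$ under the piecewise-linear map induced by the mutation, expresses the new vertices $u_3,u_4$ in terms of the old ones, and uses uniqueness of barycentric coordinates to arrive at the relation $\lambda_0\lambda_2u_2+(\lambda_1+\lambda_2)^2u_3+\lambda_0\lambda_1u_4=\orig$; the divisibility $\lambda_0\divides(\lambda_1+\lambda_2)^2$ then falls out because the unnormalised triple $\left(\lambda_0\lambda_1,\lambda_0\lambda_2,(\lambda_1+\lambda_2)^2\right)$ would otherwise fail to be well-formed. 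You instead work in $\NQ$ directly from Definitions~\ref{defn:factor} and~\ref{defn:mutation}: you pin down the vertex set of $Q$ from the factor axioms (the key step $ak=d$, so that $G_{\hmin}$ is the single point $v_1$ and $Q=\sconv{v_1,v_0,v_0+Hae}$), and then solve a two-equation linear system, with the divisibility appearing as integrality of $\mu_0$. Your combinatorial step is essentially the mechanism of the paper's proof of Proposition~\ref{prop:quotient_sings} (where the triangle condition is shown to be $v_1+(-h)F=E$), so in effect you merge the geometric content of that proposition into the weight computation; what this buys is a more self-contained and elementary proof that never invokes the dual piecewise-linear transformation, while the paper's dual-side computation establishes the $u_i$-notation and the barycentric relation~\eqref{eq:barycentre_mutation} that organise Figure~\ref{fig:mutation}.

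Two small points worth making explicit. First, your appeal to $\gcd{\mu_1,\mu_2}=1$ uses that the weights of $Y$ are well-formed, which holds because mutation preserves the Fano property (so $\V{Q}$ is primitive); this is within the paper's standing conventions but deserves a sentence, since with only $\gcd{\mu_0,\mu_1,\mu_2}=1$ your system would leave an undetermined common factor $t\divides\lambda_0$ in $\mu_1,\mu_2$. Second, the normalisation $f=ae$ with $a>0$ is harmless, but the clean justification is the lattice automorphism $(x,y)\mapsto(-x,y)$, which fixes $w=(0,1)$ and swaps the labelling $x_1<x_2$, rather than merely swapping $v_1$ and $v_2$. Neither point is a gap; both are routine to patch.
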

\begin{proof}
Consider a lattice triangle $T_1\subset\NQ$, $\orig\in\intr{T_1}$, and suppose that there exists a width vector $w\in M$ and factor $F\subset\NQ$, $w(F)=0$, such that the mutation $T_2=\mut_w(T_1,F)$ is also a triangle. Without loss of generality we can assume that $w=(0,1)\in M$ and $F=\sconv{\orig,(a,0)}$ for some $a\in\Z_{>0}$. The mutation corresponds to a piecewise linear action on $\MQ$ via $u\mapsto uM_\sigma$ given by
$$
M_\sigma=\left\{\begin{array}{ll}
\small\begin{pmatrix}
1&0\\0&1
\end{pmatrix}&\text{ if }u\in\Mp,\\
\small\begin{pmatrix}
1&-a\\0&1
\end{pmatrix}&\text{ otherwise,}
\end{array}\right.
$$
where $\Mp$ is the half-space $\{(\alpha,\beta)\in\MQ\mid\alpha>0\}$. Let $\dual{T_1}=\sconv{u_0,u_1,u_2}\subset\MQ$ be the (possibly rational) triangle dual to $T_1$, where $u_2\in\Mp$ and so is fixed under the action of the mutation, and $u_1\in\Mm:=\{(\alpha,\beta)\in\MQ\mid\alpha<0\}$. Since $\dual{T_2}\subset\MQ$ is also a triangle, the only possibility is that $u_0$ lies on the line $\linspan{w}:=\{\gamma w\in\MQ\mid\gamma\in\Q\}$, $\dual{T_2}=\sconv{u_2,u_3,u_4}$ where $u_0$ is contained in the line segment $\lineseg{u_2}{u_4}$ joining $u_2$ and $u_4$, and $u_3$ is contained in the line segment $\lineseg{u_1}{u_2}$. This situation is illustrated in Figure~\ref{fig:mutation}.

Since $\orig\in\dual{T_1}$ there exist unique weights $(\lambda_0,\lambda_1,\lambda_2)\in\Z_{>0}^3$, $\gcd{\lambda_0,\lambda_1,\lambda_2}=1$, such that
\begin{equation}\label{eq:barycentre}
\lambda_0u_0+\lambda_1u_1+\lambda_2u_2=\orig.
\end{equation}
Since $u_3=(0,\beta_3)\in\lineseg{u_1}{u_2}$ there exists some $0<\mu<1$ such that $\mu\alpha_1+(1-\mu)\alpha_2=0$. But $\lambda_1\alpha_1+\lambda_2\alpha_2=0$, hence
$$\frac{\lambda_1}{\lambda_1+\lambda_2}\alpha_1+\frac{\lambda_2}{\lambda_1+\lambda_2}\alpha_2=0.$$
By uniqueness of $\mu$,
\begin{equation}\label{eq:u3}
u_3=\frac{\lambda_1}{\lambda_1+\lambda_2}u_1+\frac{\lambda_2}{\lambda_1+\lambda_2}u_2.
\end{equation}

Similarly, since $u_0=(0,\beta_0)\in\lineseg{u_2}{u_4}$ there exists some $0<\nu<1$ such that $u_0=\nu u_2+(1-\nu)u_4$, giving
$$u_4=\frac{1}{1-\nu}u_0-\frac{\nu}{1-\nu}u_2.$$
Comparing coefficients we see that
\begin{equation}\label{eq:alpha1_1}
\alpha_1=-\frac{\nu}{1-\nu}\alpha_2.
\end{equation}
But $u_4=u_1+\kappa u_0$ for some $\kappa>0$. Combining this with equation~\eqref{eq:barycentre} we see that
$$u_4=\frac{\lambda_1\kappa-\lambda_0}{\lambda_1}u_0-\frac{\lambda_2}{\lambda_1}u_2.$$
Comparing coefficients, we obtain
\begin{equation}\label{eq:alpha1_2}
\alpha_1=-\frac{\lambda_2}{\lambda_1}\alpha_2.
\end{equation}
Equating equations~\eqref{eq:alpha1_1} and~\eqref{eq:alpha1_2} gives
\begin{equation}\label{eq:u4}
u_4=\frac{\lambda_1+\lambda_2}{\lambda_1}u_0-\frac{\lambda_2}{\lambda_1}u_2.
\end{equation}

Notice that, since both $u_0$ and $u_3$ are contained in $\linspan{w}$, there exists some $\gamma>0$ such that $-\gamma u_3=u_0$. Substituting into equation~\eqref{eq:u4} we have
\begin{equation}\label{eq:barycentre_intermediate}
\frac{\lambda_2}{\lambda_1}u_2+u_4+\gamma'u_3=\orig
\end{equation}
where $\gamma'=\gamma(\lambda_1+\lambda_2)/\lambda_1>0$. Substituting in equation~\eqref{eq:u3} we obtain
$$\frac{\lambda_2}{\lambda_1}u_2+u_4+\frac{\gamma'\lambda_1}{\lambda_1+\lambda_2}u_1+\frac{\gamma'\lambda_2}{\lambda_1+\lambda_2}u_2=\orig.$$
Using equation~\eqref{eq:u4} to rewrite the first two terms and clearing denominators gives:
\begin{equation}\label{eq:barycentre_3}
(\lambda_1+\lambda_2)^2u_0+\gamma'\lambda_1^2u_1+\gamma'\lambda_1\lambda_2u_2=\orig.
\end{equation}

Set $h:=\lambda_0+\lambda_1+\lambda_2$ and $\Gamma:=(\lambda_1+\lambda_2)^2+\gamma'\lambda_1^2+\gamma'\lambda_1\lambda_2$. By comparing equations~\eqref{eq:barycentre} and~\eqref{eq:barycentre_3}, uniqueness of barycentric coordinates gives:
\begin{align*}
h(\lambda_1+\lambda_2)^2&=\Gamma\lambda_0,\\
h\gamma'\lambda_1^2&=\Gamma\lambda_1,\\
h\gamma'\lambda_1\lambda_2&=\Gamma\lambda_2.
\end{align*}
In particular,
$$\gamma'=\frac{(\lambda_1+\lambda_2)^2}{\lambda_0\lambda_1}.$$
Substituting this expression for $\gamma'$ back into equation~\eqref{eq:barycentre_intermediate} gives
\begin{equation}\label{eq:barycentre_mutation}
\lambda_0\lambda_2u_2+(\lambda_1+\lambda_2)^2u_3+\lambda_0\lambda_1u_4=\orig.
\end{equation}

Finally, we consider the situation where $T_1\subset\NQ$ is the triangle associated with a fake weighted projective plane with weights $(\lambda_0,\lambda_1,\lambda_2)$, and assume that there exists a one-step mutation to some triangle $T_2\subset\NQ$. If $\lambda_0$ does not divide $(\lambda_1+\lambda_2)^2$, then by equation~\eqref{eq:barycentre_mutation} the associated weights are
$$\left(\lambda_0\lambda_1,\lambda_0\lambda_2,(\lambda_1+\lambda_2)^2\right),$$
and these fail to be well-formed when $\lambda_0>1$. Therefore, we must have $\lambda_0\divides(\lambda_1+\lambda_2)^2$, giving weights
$$\left(\lambda_1,\lambda_2,\frac{(\lambda_1+\lambda_2)^2}{\lambda_0}\right).$$
\end{proof}

\begin{remark}
Let $(\lambda_0,\lambda_1,\lambda_2)$ be well-formed weights such that $\lambda_0\divides(\lambda_1+\lambda_2)^2$, and suppose that there exists some prime $p$ such that
$$p\divides \lambda_1\quad\text{ and }\quad p\divides\frac{(\lambda_1+\lambda_2)^2}{\lambda_0}.$$
Then $p\divides\lambda_2^2$ and so $p\divides\lambda_2$. But this contradicts $(\lambda_0,\lambda_1,\lambda_2)$ being well-formed. Hence
$$\left(\lambda_1,\lambda_2,\frac{(\lambda_1+\lambda_2)^2}{\lambda_0}\right)$$
are also well-formed.
\end{remark}

\begin{example}\label{ex:P3511_rigid}
There exists no one-step mutation from $\Proj(3,5,11)$ to any other weighted projective space, since $3\notdivides(5+11)^2$, $5\notdivides(3+11)^2$, and $11\notdivides(3+5)^2$.
\end{example}

\begin{example}\label{ex:not_sufficient}
The requirement that $\lambda_0\divides(\lambda_1+\lambda_2)^2$ in Proposition~\ref{prop:weights_transform} is necessary but not sufficient. For example, consider the triangle $T=\sconv{(10,-7),(-5,2),(0,1)}\subset\NQ$. This has weights $(1,2,3)$, however there exist no one-step mutations from $T$.
\end{example}

\subsection{One-step mutations in $\NQ$ and $T$-singularities}\label{subsec:T_sings}
Our aim in this section is to characterise when a mutation exists. In order to do this, we require the definition of a $T$-singularity.

\begin{definition}[\protect{\cite[Definition~3.7]{KS88}}]
A quotient surface singularity is called a \emph{$T$-singularity} if it admits a $\Q$-Gorenstein one-parameter smoothing.
\end{definition}
$T$-singularities include the du Val singularities $\frac{1}{r}(1,r-1)$, and are cyclic quotient singularities of the form $\frac{1}{nd^2}(1,dna-1)$, where $\gcd{d,a}=1$~\cite[Proposition~3.10]{KS88}.

\begin{lemma}\label{lem:T_singularities}
An isolated quotient singularity $\frac{1}{r}(a,b)$ is a $T$-singularity if and only if $r\divides(a+b)^2$.
\end{lemma}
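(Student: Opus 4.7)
The plan is to prove both directions by first reducing to the case $a=1$ and then exploiting the explicit T-singularity form $\frac{1}{nd^2}(1, dna-1)$ described just before the lemma.

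Since $\frac{1}{r}(a,b)$ is isolated we have $\gcd(a,r) = \gcd(b,r) = 1$, so $a$ is invertible modulo $r$. Under the standard equivalence of cyclic quotient singularities (multiplying both weights by a unit mod $r$, possibly swapping), $\frac{1}{r}(a,b) \cong \frac{1}{r}(1, a^{-1}b \bmod r)$. Moreover the divisibility condition is invariant: $r \mid (a+b)^2$ iff $r \mid a^2(1 + a^{-1}b)^2$ iff $r \mid (1 + a^{-1}b)^2$, since $\gcd(a,r)=1$. Hence I may assume $a = 1$ throughout.

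For the forward direction, suppose $\frac{1}{r}(1,b)$ is a T-singularity, i.e.\ isomorphic to $\frac{1}{nd^2}(1, dna-1)$ with $\gcd(d,a)=1$. Equivalence of cyclic quotients forces $r = nd^2$ and either $(1,b) \equiv (k, k(dna-1)) \pmod r$ or $(1,b) \equiv (k(dna-1), k) \pmod r$ for some unit $k$ mod $r$. In the first case $1 + b \equiv dna \pmod r$, so $(1+b)^2 \equiv (dna)^2 = na^2 \cdot r \equiv 0 \pmod r$. In the second case $k(dna-1) \equiv 1$ forces $b \cdot dna \equiv 1 + b \pmod r$, and squaring again gives $(1+b)^2 \equiv b^2(dna)^2 \equiv 0 \pmod r$. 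Either way $r \mid (1+b)^2$.

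For the reverse direction, assume $r \mid (1+b)^2$. Let $e = \gcd(r, 1+b)$ and write $r = ev$ and $1 + b = eu$ with $\gcd(u,v) = 1$. The hypothesis $ev \mid e^2 u^2$ combined with coprimality of $u,v$ forces $v \mid e$; writing $e = vd$ yields $r = v^2 d$ and $b = vdu - 1$. Setting $n := d$, $D := v$, $a' := u$, we get $r = nD^2$, $b = Dna' - 1$, and $\gcd(D, a') = \gcd(v,u) = 1$, which is the canonical T-singularity form. (Isolated-ness of the result is automatic, since $\gcd(b, r) = \gcd(vdu - 1, v^2d) = 1$.) The main obstacle is simply keeping track of the swap ambiguity in the equivalence relation on cyclic quotients; the rest is an elementary gcd manipulation.
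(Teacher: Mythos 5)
Your proof is correct, and its overall skeleton matches the paper's: both reduce to the case $a=1$ using that $a$ is a unit mod $r$ (after checking the divisibility condition is invariant under that operation), and both verify the forward direction by computing $(1+(dna-1))^2=(dna)^2=na^2\cdot nd^2\equiv 0\ \modb{nd^2}$ on the canonical T-form. Where you genuinely diverge is the converse. The paper writes $r=nd^2$ with $n$ square-free, deduces $nd\divides b'$ from $nd^2\divides b'^2$ by a valuation argument, obtains the form $\frac{1}{nd^2}(1,dn\alpha-1)$, and then must repair the possible failure of $\gcd{d,\alpha}=1$ by absorbing $c=\gcd{d,\alpha}$ into $n'=nc^2$, $d'=d/c$, $\alpha'=\alpha/c$. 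You instead set $e=\gcd{r,1+b}$, write $r=ev$, $1+b=eu$ with $\gcd{u,v}=1$, and note $ev\divides e^2u^2$ forces $v\divides e$, giving $r=v^2d$ and $b=vdu-1$ with the coprimality $\gcd{v,u}=1$ built in from the start. This is a cleaner decomposition: it reaches the T-form in one step, with no square-free factorization and no rescaling afterthought. Two smaller contrasts: you are more scrupulous about the equivalence relation in the forward direction, handling the unit $k$ and the swap $(a,b)\mapsto(b,a)$ explicitly (the paper's invariance paragraph covers only multiplication by units, leaving swap-invariance, which is anyway obvious from the symmetry of $(a+b)^2$, implicit), though given that invariance you could have shortened this to the paper's one-line check; and your parenthetical verification that $\gcd{vdu-1,v^2d}=1$ is redundant, since isolatedness was a hypothesis preserved by your reduction, but it is harmless.
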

\begin{proof}
We begin by noting that the condition that $r\divides(a+b)^2$ is independent of the choice of representation of $\frac{1}{r}(a,b)$. For let $c$ be any integer coprime to $r$. Then $r\divides(a+b)^2$ if and only if $r\divides c^2(a+b)^2=(ca+cb)^2$.

Suppose we are given a $T$-singularity. Writing the singularity in the form $\frac{1}{nd^2}(1,dna-1)$ where $\gcd{d,a}=1$, we see that $nd^2\divides d^2n^2a^2$. Conversely consider the isolated quotient singularity $\frac{1}{r}(a,b)$. Since $a$ is invertible $\bmod\ r$, we can write this as $\frac{1}{r}(1,b'-1)$, where $b'\equiv ba^{-1}+1\ \modb{r}$. Write $r=nd^2$ where $n$ is square-free. Since $nd^2\divides b'^2$ by assumption, we see that $nd\divides b'$. In particular, we can express our singularity in the form $\frac{1}{nd^2}(1,dn\alpha-1)$ for some $\alpha\in\Z_{>0}$. Finally, we note that this really is a $T$-singularity: if $\gcd{d,\alpha}=c$ then we can absorb this factor into $n'=nc^2$ whilst rescaling $d'=d/c$ and $\alpha'=\alpha/c$.
\end{proof}

\begin{prop}\label{prop:quotient_sings}
Let $X$ be a fake weighted projective plane corresponding to a triangle $T\subset\NQ$, and suppose that the cone $C$ spanned by an edge $E$ of $T$ corresponds to a $\frac{1}{r}(a,b)$ singularity. There exists a one-step mutation to a fake weighted projective plane $Y$ given by $\mut_w(T,F)$ with $w(E)=\hmin$ if and only if $\frac{1}{r}(a,b)$ is a $T$-singularity.
\end{prop}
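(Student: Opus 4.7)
The plan is to translate both sides of the equivalence into a single arithmetic criterion on the lattice geometry of $E$. I would normalise coordinates so that $w=(0,1)$ and $E$ lies at height $\hmin=-h$ with primitive endpoints $v_1=(p,-h)$ and $v_2=(q,-h)$, and set $L:=q-p$ for the lattice length of $E$; then $r=hL$. To pin down the singularity type I compute its Gorenstein index in two ways. Toric-geometrically, the anticanonical class corresponds on the local chart to the unique $u\in\MQ$ satisfying $u(v_1)=u(v_2)=1$, and the Gorenstein index is the smallest $m>0$ with $mu\in M$; solving gives $u=(0,-1/h)$, so the Gorenstein index equals $h$. On the other hand, for the abstract cyclic quotient $\frac{1}{r}(a,b)$ the Gorenstein index equals $r/\gcd(r,a+b)$, hence $\gcd(r,a+b)=L$. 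Writing $a+b=Lt$ with $\gcd(t,h)=1$, Lemma~\ref{lem:T_singularities} reduces to $hL\divides L^2t^2$, i.e.~to $h\divides L$.

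Next I would analyse when $\mut_w(T,F)$ is again a triangle. Up to translation any non-trivial factor takes the form $F=\sconv{\orig,(s,0)}$ with $s\in\Z_{>0}$, and the containments $\{v_1,v_2\}\subseteq G_{-h}+hF\subseteq w_{-h}(T)=E$ force $|G_{-h}|\geq L-hs$ (to cover $v_1$ and $v_2$) and $|G_{-h}|\leq L-hs$ (to stay inside $E$); hence the bottom slice of the mutated polytope is a segment of length exactly $L-hs$. The top, $v_0+\hmax F$, is a segment of positive length $\hmax\cdot s$, so for $\mut_w(T,F)$ to be a triangle the bottom must collapse to a point, forcing $L=hs$ and hence $h\divides L$. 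Conversely, if $h\divides L$ I set $s:=L/h$ and verify by a slice-by-slice comparison that $\mut_w(T,F)$ coincides with the triangle having vertices $v_1$, $v_0$, and $v_0+\hmax\cdot se$, where $e=(1,0)$ is the primitive direction of $E$; mutations preserve the Fano property, so this is a fake weighted projective plane.

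The main obstacle I anticipate is the two-sided Gorenstein-index computation in the first paragraph: identifying $h$ with $r/\gcd(r,a+b)$ requires care over signs and orientations in the toric calculation of $u$, together with the observation (already used in the proof of Lemma~\ref{lem:T_singularities}) that the formula $r/\gcd(r,a+b)$ is invariant under the choice of representation $\frac{1}{r}(a,b)$ of the singularity. Once $h\divides L$ has been identified as the common criterion, the slice-matching verification is a routine linear-interpolation exercise: the identity $L=hs$ ensures that, at each intermediate height $h'\in[0,\hmax]$, the segment $w_{h'}(T)+h'F$ has the same length and left endpoint as the corresponding slice of the target triangle, so the two polytopes agree slice by slice.
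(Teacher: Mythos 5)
Your argument is correct, and its first half takes a genuinely different route from the paper's. The paper works from the singularity data to the geometry: setting $k:=\gcd{r,a+b}$, it computes directly that $E$ lies at height $-r/k$ and carries exactly $k+1$ lattice points, by counting the residues $m\in\{0,\ldots,r\}$ with $(a+b)m\equiv 0\ \modb{r}$; the criterion ``height divides lattice length'' then becomes $r/k\divides k$, i.e.\ $r\divides(a+b)^2$, and Lemma~\ref{lem:T_singularities} finishes. You go the opposite way, from the geometry to the singularity: $r=hL$ from the determinant, and $\gcd{r,a+b}=L$ extracted by computing the local Gorenstein index twice, once toric-geometrically (it equals $h$, via $u=(0,-1/h)$) and once abstractly (it equals $r/\gcd{r,a+b}$ for $\frac{1}{r}(a,b)$). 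This buys you representation-independence for free, since the Gorenstein index is intrinsic, where the paper must argue this point inside Lemma~\ref{lem:T_singularities}; the cost is importing the standard index formula $r/\gcd{r,a+b}$, which the paper's congruence count avoids. Your reduction $hL\divides L^2t^2\Leftrightarrow h\divides L$ (using $\gcd{t,h}=1$, which does follow from $\gcd{hL,Lt}=L$) lands on the same arithmetic criterion as the paper's $r/k\divides k$. For the second half, the paper merely asserts that a factor yields a triangle if and only if $v_1+(-h)F=E$, equivalently $h\divides\abs{E\cap N}-1$; your bottom-slice analysis ($\abs{G_{-h}}=L-hs$ is forced, the top slice $v_0+\hmax F$ has positive length, so a triangle requires $L=hs$) supplies the detail the paper omits and recovers exactly that criterion.

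One small repair is needed in your converse. The slices $w_{h'}(T)$ are convex hulls of \emph{lattice} points, so $w_{h'}(T)+h'F$ can be strictly shorter than, or even empty compared with, the height-$h'$ slice of the target triangle $\Delta:=\sconv{v_1,v_0,v_0+\hmax s e}$; this already happens at height $1$ in the $\Proj^2$ example of Figure~\ref{fig:P2_to_P114}, where the lattice slice is empty while the slice of $\Delta$ has length $4/3$. Your interpolation identity (same length and left endpoint, using $L=hs$) is correct for the \emph{rational} slices $(H_{w,h'}\cap T)+h'F$, and therefore proves the containment $w_{h'}(T)+h'F\subseteq\Delta$ for all $0\leq h'\leq\hmax$; since moreover $G_{-h}=\{v_1\}$ and $w_{\hmax}(T)+\hmax F$ is the entire top edge of $\Delta$, all three vertices of $\Delta$ occur among the generating sets, whence $\mut_w(T,F)=\Delta$. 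With ``the two polytopes agree slice by slice'' weakened to this containment-plus-vertices argument, your proof is complete.
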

\begin{proof}
Let $X$ correspond to the lattice triangle $T=\sconv{v_1,v_2,v_3}\subset\NQ$, where $\orig\in\intr{T}$ and the vertices $\V{T}\subset N$ are all primitive. Consider the cone $C=\scone{v_1,v_2}$ spanned by the edge $E=\lineseg{v_1}{v_2}$; this is an isolated quotient singularity (possibly smooth), so is of the form $\frac{1}{r}(a,b)$ for some $r,a,b\in\Z_{>0}$, $\gcd{r,a}=\gcd{r,b}=1$.

Let $w\in M$ be a primitive lattice point such that $w(v_1)=w(v_2)=h$ for some $h<0$. Then, up to translation, there exists a factor $F\subset\NQ$, $w(F)=0$, such that $T':=\mut_w(T,F)$ is a triangle if and only if $v_1+(-h)F=E$. Equivalently, if and only if $h\divides\abs{E\cap N}-1$.

Finally, we express the values of $h$ and $\abs{E\cap N}-1$ in terms of the singularity $\frac{1}{r}(a,b)$. Set $k:=\gcd{r,a+b}$. Then the height $h=-r/k$, and the number of points on the edge $E$ is given by
$$\abs{\{m\mid m\in\{0,\ldots,r\}\text{ and }(a+b)m\equiv 0\ \modb{r}\}}=1+\frac{r}{h}=1+k.$$
Hence $h\divides\abs{E\cap N}-1$ if and only if $r/k\divides k$. But $r/k\divides k$ if and only if $r\divides\gcd{r,a+b}^2=\gcd{r^2,(a+b)^2}$, and $r\divides\gcd{r^2,(a+b)^2}$ if and only if $r\divides(a+b)^2$. The result follows by Lemma~\ref{lem:T_singularities}.
\end{proof}

\begin{example}\label{ex:not_T_singularities}
Returning to Example~\ref{ex:not_sufficient}, we see that the corresponding fake weighted projective space $X$ is a quotient of $\Proj(1,2,3)$ with $\mult{X}=5$. The three singularities are $\frac{1}{5}(1,3)$, $\frac{1}{10}(1,3)$, and $\frac{1}{15}(1,11)$, none of which is a $T$-singularity.
\end{example}

When $X$ is a weighted projective plane, Proposition~\ref{prop:quotient_sings} tells us that the condition that $\lambda_0\divides(\lambda_1+\lambda_2)^2$ in Proposition~\ref{prop:weights_transform} is both necessary and sufficient.

\subsection{One-step mutations and Diophantine equations}
Given the results of~\S\ref{subsec:weights} and~\S\ref{subsec:T_sings}, we are now in a position to relate one-step mutations of Fano triangles to solutions of certain Diophantine equations.

\begin{lemma}\label{lem:diophantine_derivation}
Let $(\lambda_0,\lambda_1,\lambda_2)\in\Z_{>0}^3$ with $d=\gcd{\lambda_0,\lambda_1,\lambda_2}$. Write:
\begin{enumerate}
\item\label{condn:lambda}
$\lambda_i=dc_ia_i^2$, where $a_i,c_i\in\Z_{>0}$ and $c_i$ is square-free;
\item\label{condn:degree}
$(\lambda_0+\lambda_1+\lambda_2)^2/(\lambda_0\lambda_1\lambda_2)=m^2/(rk^2)$, where $m,k,r\in\Z_{>0}$ and $r$ is square-free;
\item\label{condn:square_free}
$c_0c_1c_2 = gS^2$ and $dr = hT^2$, where $g,h,S,T\in\Z_{>0}$ and both $g$ and $h$ are square-free.
\end{enumerate}
Then $(da_0,da_1,da_2)$ is a solution to the Diophantine equation
\begin{equation}\label{eq:diophantine_general}
Smx_0x_1x_2 = Tk(c_0x_0^2 + c_1x_1^2 + c_2x_2^2).
\end{equation}
\end{lemma}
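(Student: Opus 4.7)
The plan is to verify the Diophantine identity directly by substitution $x_i = da_i$ and to reduce the whole problem to a single non-obvious statement: that the square-free parts of $c_0c_1c_2$ and of $dr$ coincide, i.e.\ that $g = h$.

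First I would substitute $x_i = da_i$ in~\eqref{eq:diophantine_general}. Using $c_i a_i^2 = \lambda_i/d$ from condition~\eqref{condn:lambda}, the right-hand side simplifies to $Tkd(\lambda_0+\lambda_1+\lambda_2)$, while the left-hand side is $Smd^3 a_0a_1a_2$. Cancelling a common factor of $d$, the identity I need to prove becomes
$$Smd^2 a_0a_1a_2 = Tk(\lambda_0+\lambda_1+\lambda_2).$$

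Since both sides are positive, I can square. Using condition~\eqref{condn:degree} in the form $rk^2(\lambda_0+\lambda_1+\lambda_2)^2 = m^2\lambda_0\lambda_1\lambda_2$, together with $\lambda_0\lambda_1\lambda_2 = d^3 gS^2 (a_0a_1a_2)^2$ from conditions~\eqref{condn:lambda} and~\eqref{condn:square_free}, the squared identity collapses, after the obvious cancellations, to $dr = gT^2$. In view of $dr = hT^2$ from condition~\eqref{condn:square_free}, this is equivalent to $g = h$.

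The hard part will be to prove this final equality $g = h$. The key tool will be that multiplying a positive integer by a perfect square leaves its square-free part unchanged. Taking square-free parts on both sides of $rk^2(\lambda_0+\lambda_1+\lambda_2)^2 = m^2\lambda_0\lambda_1\lambda_2$ identifies $r$ (already square-free) with the square-free part of $\lambda_0\lambda_1\lambda_2 = d^3 c_0c_1c_2 (a_0a_1a_2)^2$, and hence of $dc_0c_1c_2$. Multiplying by $d$, the integer $dr$ then has the same square-free part as $d^2 c_0c_1c_2$, and thus as $c_0c_1c_2$, namely $g$. Since the square-free part of $dr$ is also $h$ by definition, we conclude $g = h$. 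The main subtlety, which I view as the real content of the lemma, is precisely this: although $g$ and $h$ are introduced as independently chosen square-free integers in condition~\eqref{condn:square_free}, the remaining hypotheses on $(\lambda_0,\lambda_1,\lambda_2)$ force them to coincide.
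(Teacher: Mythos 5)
Your proof is correct and takes essentially the same route as the paper's: both substitute the decompositions from conditions~\eqref{condn:lambda} and~\eqref{condn:square_free} into the degree relation~\eqref{condn:degree}, deduce $g=h$ by comparing square-free parts of the resulting integer identity, and then pass to square roots (both sides being positive) to obtain~\eqref{eq:diophantine_general} at $(da_0,da_1,da_2)$. The only difference is presentational --- you work backwards from the target equation to isolate $g=h$ as the crux, and unpack the square-free-part comparison via $r=$ the square-free part of $dc_0c_1c_2$, whereas the paper argues forwards from one displayed identity --- but the mathematical content is identical.
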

\begin{proof}
By substituting expressions~\eqref{condn:lambda} and~\eqref{condn:square_free} into~\eqref{condn:degree} we obtain
$$
gS^2m^2(da_0)^2(da_1)^2(da_2)^2 = hT^2k^2\left(c_0(da_0)^2 + c_1(da_1)^2 + c_2(da_2)^2\right)^2.
$$
Comparing square-free parts, we conclude that $g = h$. Cancelling and taking square-roots on both sides establishes the result.
\end{proof}

Since the weights are assumed to be well-formed, $d=S=T=1$ and equation~\eqref{eq:diophantine_general} becomes
\begin{equation}\label{eq:diophantine}
mx_0x_1x_2 = k(c_0x_0^2 + c_1x_1^2 + c_2x_2^2).
\end{equation}
Suppose that $(a_0,a_1,a_2)$ is a positive integral solution to equation~\eqref{eq:diophantine}, so that $\lambda_i=c_ia_i^2$. The expression
\begin{equation}\label{eq:wps_degree}
\frac{(\lambda_0+\lambda_1+\lambda_2)^2}{\lambda_0\lambda_1\lambda_2}
\end{equation}
occurring in Lemma~\ref{lem:diophantine_derivation} is equal to the degree of $\Proj(\lambda_0,\lambda_1,\lambda_2)$. More generally if $X$ is a fake weighted projective plane with weights $(\lambda_0,\lambda_1,\lambda_2)$ then~\eqref{eq:wps_degree} is equal to $\mult{X}(-K_X)^2$.

\begin{prop}\label{prop:mult_preserved}
Let $X$ be a fake weighted projective plane and suppose that there exists a one-step mutation to a fake weighted projective plane $Y$. Then the weights of $X$ and $Y$ give solutions to the same Diophantine equation~\eqref{eq:diophantine}. In particular, $\mult{X}=\mult{Y}$.
\end{prop}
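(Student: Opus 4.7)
The plan is to use Proposition~\ref{prop:weights_transform} to write the weights of $Y$ explicitly, then verify directly that they yield the same Diophantine equation~\eqref{eq:diophantine} as the weights of $X$. Write $\lambda_i=c_ia_i^2$ with $c_i$ square-free; the equation~\eqref{eq:diophantine} is then determined by the multiset $\{c_0,c_1,c_2\}$ together with the ratio $m/k$, and substituting the solution $(a_0,a_1,a_2)$ into~\eqref{eq:diophantine} immediately gives $m/k=(\lambda_0+\lambda_1+\lambda_2)/(a_0a_1a_2)$.

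The heart of the argument is to show that the third weight $\lambda_2':=(\lambda_1+\lambda_2)^2/\lambda_0$ of $Y$ again factors as $c_0t^2$ for a positive integer $t$. Since $\lambda_0=c_0a_0^2$ divides $(\lambda_1+\lambda_2)^2$ by Proposition~\ref{prop:weights_transform} and $c_0$ is square-free, a prime-by-prime comparison of exponents promotes this to the linear divisibility $c_0a_0\mid\lambda_1+\lambda_2$. Setting $t:=(\lambda_1+\lambda_2)/(c_0a_0)\in\Z_{>0}$, a direct substitution then gives $\lambda_2'=c_0t^2$. This is the main obstacle; everything else will be routine substitution. It follows that the square-free parts of the weights $(\lambda_1,\lambda_2,\lambda_2')$ of $Y$ form the permutation $(c_1,c_2,c_0)$ of $(c_0,c_1,c_2)$, and the triple $(a_1,a_2,t)$ plays the role of $(a_0,a_1,a_2)$ for $Y$.

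To finish, I observe that $\lambda_1+\lambda_2+\lambda_2'=(\lambda_1+\lambda_2)(\lambda_0+\lambda_1+\lambda_2)/\lambda_0$, so
$$\frac{\lambda_1+\lambda_2+\lambda_2'}{a_1a_2t}=\frac{c_0a_0(\lambda_0+\lambda_1+\lambda_2)}{\lambda_0a_1a_2}=\frac{\lambda_0+\lambda_1+\lambda_2}{a_0a_1a_2},$$
so the $m/k$ ratio is preserved and the Diophantine equations for $X$ and $Y$ coincide. A similar direct computation yields $(\lambda_1+\lambda_2+\lambda_2')^2/(\lambda_1\lambda_2\lambda_2')=(\lambda_0+\lambda_1+\lambda_2)^2/(\lambda_0\lambda_1\lambda_2)$, i.e.\ $\mult{X}(-K_X)^2=\mult{Y}(-K_Y)^2$. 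Combined with the preservation of degree under mutation (property~(v) of~\S\ref{sec:mutations_review}), this forces $\mult{X}=\mult{Y}$.
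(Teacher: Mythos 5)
Your proposal is correct and takes essentially the same route as the paper: invoke Proposition~\ref{prop:weights_transform} for the weights of $Y$, observe that the new weight $(\lambda_1+\lambda_2)^2/\lambda_0$ has square-free part $c_0$, check that the ratio $m/k$ is preserved by direct computation, and deduce $\mult{X}=\mult{Y}$ from the mutation-invariance of the degree together with $\mult{X}(-K_X)^2=m^2/(rk^2)$. The only difference is that you make explicit, via the prime-exponent argument giving $c_0a_0\divides\lambda_1+\lambda_2$, the step the paper states without proof (``the final weight\ldots has square-free part $c_0$''), and you verify the preservation of $m/k$ in its square-root form $h/(a_0a_1a_2)$ rather than through the squared degree expression --- both are sound and equivalent.
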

\begin{proof}
With notation as in Lemma~\ref{lem:diophantine_derivation}, we can write the weights $(\lambda_0,\lambda_1,\lambda_2)$ of $X$ in the form $\lambda_i=c_ia_i^2$, where the $c_i$ are square-free positive integers. From Proposition~\ref{prop:weights_transform} we know that $Y$ has weights
$$\left(\lambda_1,\lambda_2,\frac{(\lambda_1+\lambda_2)^2}{\lambda_0}\right)=\left(c_1a_1^2,c_2a_2^2,\frac{(c_1a_1^2+c_2a_2^2)^2}{c_0a_0^2}\right).$$
The final weight is an integer; in particular, it has square-free part $c_0$. Thus the $c_i$ are invariant under mutation. Furthermore,
\begin{align*}
\frac{\left(\lambda_1+\lambda_2+\frac{(\lambda_1+\lambda_2)^2}{\lambda_0}\right)^2}{\lambda_1\cdot\lambda_2\cdot\frac{(\lambda_1+\lambda_2)^2}{\lambda_0}}&=\frac{\left(\lambda_0\lambda_1+\lambda_0\lambda_2+(\lambda_1+\lambda_2)^2\right)^2}{\lambda_0\lambda_1\lambda_2(\lambda_1+\lambda_2)^2}\\
&=\frac{(\lambda_0+\lambda_1+\lambda_2)^2}{\lambda_0\lambda_1\lambda_2}\\
&=\frac{m^2}{rk^2}
\end{align*}
and so the ratio $m/k$ is also preserved by mutation. Hence the weights of $X$ and of $Y$ both generate solutions to the same Diophantine equation~\eqref{eq:diophantine}.

Finally we recall that degree is fixed under mutation, hence $(-K_X)^2=(-K_Y)^2$. But
$$\frac{m^2}{rk^2}=\mult{X}(-K_X)^2=\mult{Y}(-K_Y)^2$$
and so $\mult{X}=\mult{Y}$.
\end{proof}

By combining Propositions~\ref{prop:weights_transform},~\ref{prop:quotient_sings}, and~\ref{prop:mult_preserved} we obtain Proposition~\ref{prop:only_mutate_T_sings}.

\begin{remark}
The weights of a fake weighted projective plane correspond to a solution $(a_0,a_1,a_2)$ of equation~\eqref{eq:diophantine}. A one-step mutation gives a second solution via the transformation:
$$(a_0,a_1,a_2)\mapsto\left(\frac{m}{k}\frac{a_1a_2}{c_0}-a_0,a_1,a_2\right).$$
\end{remark}

\begin{example}\label{ex:mutations_of_P2}
Consider $\Proj^2$. In this case $m/k=3$, $c_0=c_1=c_2=1$, and $(1,1,1)\in\Z_{>0}^3$ is a solution of
\begin{equation}\label{eq:diophantine_P2}
3x_0x_1x_2=x_0^2+x_1^2+x_2^2.
\end{equation}
Up to isomorphism, there is a single one-step mutation to $\Proj(1,1,4)$, giving a solution $(1,1,2)\in\Z_{>0}^3$ of equation~\eqref{eq:diophantine_P2}. Proceeding in this fashion we obtain a graph of one-step mutations corresponding to solutions of~\eqref{eq:diophantine_P2}, which we illustrate to a depth of five mutations:
\begin{center}
\includegraphics[scale=0.85]{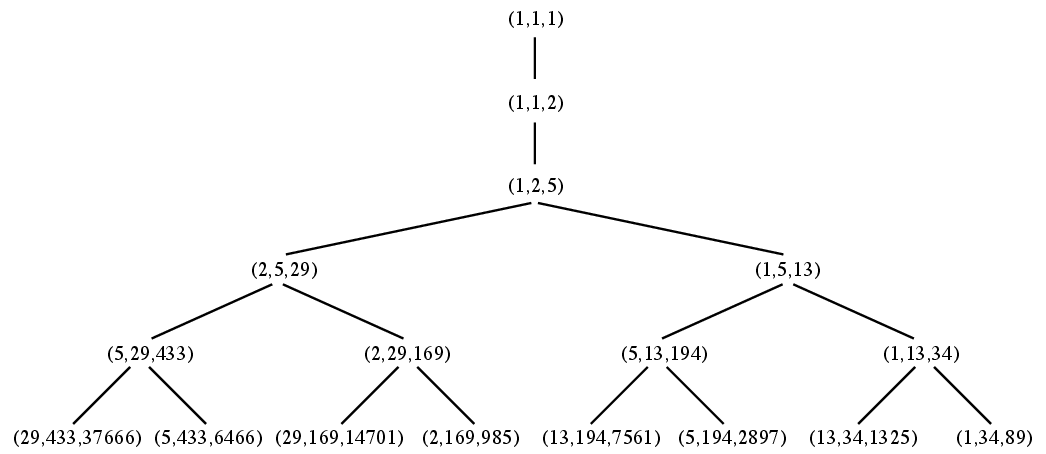}
\end{center}
\end{example}

\begin{definition}\label{defn:minimal}
The \emph{height} of the weights $(\lambda_0,\lambda_1,\lambda_2)$ is given by the sum $h:=\lambda_0+\lambda_1+\lambda_2\in\Z_{>0}$. We call the weights \emph{minimal} if for any sequence of one-step mutations $(\lambda_0,\lambda_1,\lambda_2)\mapsto\ldots\mapsto(\lambda'_0,\lambda'_1,\lambda'_2)$ we have that $h\leq h'$.
\end{definition}

\begin{lemma}\label{lem:reduce_height}
Given weights $(\lambda_0,\lambda_1,\lambda_2)$ at height $h$ there exists at most one one-step mutation such that $h'\le h$. Moreover, if $h'=h$ then the weights are the same.
\end{lemma}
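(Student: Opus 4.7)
The plan is to rely on the explicit weight-transformation formula of Proposition~\ref{prop:weights_transform} and convert the entire question about mutations into an elementary inequality between the three weights. The three candidate one-step mutations correspond to the three ways of choosing an index $i \in \{0,1,2\}$ such that $\lambda_i \divides (\lambda_j+\lambda_k)^2$ (with $\{i,j,k\} = \{0,1,2\}$), and each produces the new triple $(\lambda_j,\lambda_k,(\lambda_j+\lambda_k)^2/\lambda_i)$.

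First I would compute the new height in terms of the old, via
$$h' = \lambda_j+\lambda_k+\frac{(\lambda_j+\lambda_k)^2}{\lambda_i} = \frac{(\lambda_j+\lambda_k)(\lambda_i+\lambda_j+\lambda_k)}{\lambda_i} = \frac{(\lambda_j+\lambda_k)\,h}{\lambda_i}.$$
Since $h>0$ and $\lambda_i>0$, this gives the clean equivalence $h' \le h \iff \lambda_i \ge \lambda_j+\lambda_k$, with equality on one side exactly when equality holds on the other.

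Next I would show that the condition $\lambda_i \ge \lambda_j+\lambda_k$ can be satisfied by at most one index $i \in \{0,1,2\}$. If it held for two indices, say $\lambda_0 \ge \lambda_1+\lambda_2$ and $\lambda_1 \ge \lambda_0+\lambda_2$, then adding these inequalities gives $0 \ge 2\lambda_2$, contradicting the positivity of the weights. This immediately yields the first conclusion: at most one one-step mutation can achieve $h' \le h$.

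Finally, for the equality part, substituting $\lambda_i = \lambda_j+\lambda_k$ into the mutated triple produces $(\lambda_j,\lambda_k,\lambda_j+\lambda_k) = (\lambda_j,\lambda_k,\lambda_i)$, which is the original set of weights up to reordering. No step is really an obstacle; the whole argument is a triangle-inequality observation applied to the explicit formula of Proposition~\ref{prop:weights_transform}, together with the positivity of the weights.
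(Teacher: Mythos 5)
Your proposal is correct and takes essentially the same route as the paper: both use the explicit formula of Proposition~\ref{prop:weights_transform} to translate $h'\le h$ into the weight inequality $\lambda_i\ge\lambda_j+\lambda_k$ (the paper phrases it in squared form, $(\lambda_j+\lambda_k)^2\le\lambda_i^2$), show that two such inequalities contradict positivity of the weights, and handle the equality case by noting that $h'=h$ forces $(\lambda_j+\lambda_k)^2/\lambda_i=\lambda_i$, returning the original triple. Your factorisation $h'=(\lambda_j+\lambda_k)h/\lambda_i$ is a marginally tidier way to see the equivalence, but the argument is the same.
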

\begin{proof}
Without loss of generality suppose we have two one-step mutations
$$\left(\lambda_1,\lambda_2,\frac{(\lambda_1+\lambda_2)^2}{\lambda_0}\right)\qquad\text{ and }\qquad\left(\lambda_0,\frac{(\lambda_0+\lambda_2)^2}{\lambda_1},\lambda_2\right)$$
with respective heights $h'$ and $h''$ such that $h'\le h$ and $h''\le h$. Since $h'\le h$ we obtain $(\lambda_1+\lambda_2)^2\leq\lambda_0^2$, and so
\begin{equation}\label{eq:h'_lt_h}
\lambda_1^2+\lambda_2^2<\lambda_0^2.
\end{equation}
From $h''\le h$ we obtain
\begin{equation}\label{eq:h''_lt_h}
\lambda_0^2+\lambda_2^2<\lambda_1^2.
\end{equation}
Combining equations~\eqref{eq:h'_lt_h} and~\eqref{eq:h''_lt_h} gives a contradiction, hence there exists at most one one-step mutation such that $h'\leq h$. If we suppose that $h'=h$ then
$$\frac{(\lambda_1+\lambda_2)^2}{\lambda_0}=\lambda_0$$
and equality of the weights is immediate.
\end{proof}

The height imposes a natural direction on the graph of all one-step mutations generated by the weight $(\lambda_0,\lambda_1,\lambda_2)$. Lemma~\ref{lem:reduce_height} tells us that this directed graph is a tree, with a uniquely defined minimal weight.

\section{Example: An infinite number of minimal weights}\label{sec:infinite_min_weights}
In this section we shall focus on the Diophantine equation
\begin{equation}\label{eq:3_5_7}
12x_0x_1x_2=3x_0^2+5x_1^2+7x_2^2.
\end{equation}
Any solution $(a_0,a_1,a_2)$ such that $(3a_0^2,5a_1^2,7a_2^2)$ is well-formed corresponds to weighted projective space $\Proj(3a_0^2,5a_1^2,7a_2^2)$ of degree $144/105$. One possible such solution is $(2,1,1)$ giving $\Proj(12,5,7)$. Consider the graph $\mathcal{G}$ of all such solutions. Two solutions lie in the same component if and only if there exists a sequence of one-step mutations between the corresponding weighted projective planes. Furthermore, each component is a tree with unique minimal weight. We shall show that there exists an infinite number of components, and that every component contains at most two solutions; in fact the only component with a single solution is $(2,1,1)$.

\subsection{Coprime solutions give well-formed weights}
Let $(a_0,a_1,a_2)$ be a solution of equation~\eqref{eq:3_5_7} such that $\gcd{a_0,a_1,a_2}=1$. Clearly this is a necessary condition for the corresponding weights $(3a_0^2,5a_1^2,7a_2^2)$ to be well-formed. We shall show that it is sufficient. For suppose that there exists some prime $p$ such that $p\divides c_ia_i^2$ and $p\divides c_ja_j^2$, $i\ne j$. Since $p$ cannot simultaneously divide both $c_i$ and $c_j$, we have that $p$ must divide either $a_i$ or $a_j$. In particular, $p\divides 12a_0a_1a_2$ and so, by equation~\eqref{eq:3_5_7}, $p$ divides the remaining weight $c_ka_k^2$. Similarly, since $p$ can divide at most one of $3$, $5$, and $7$ we see that $p^2\divides 12a_0a_1a_2$ and so $p^2$ divides each of the three weights. We conclude that $p\divides\gcd{a_0,a_1,a_2}$, contradicting coprimality.

\subsection{A necessary and sufficient condition for rational solutions when $a_1$ and $a_2$ are fixed}
Fix $a_1,a_2\in\Z_{>0}$ and consider the quadratic
\begin{equation}\label{eq:3_5_7_quadratic}
12xa_1a_2=3x^2+5a_1^2+7a_2^2.
\end{equation}
The discriminant is given by
$$12^2a_1^2a_2^2-12(5a_1^2+7a_2^2)=12\left(5a_1^2(a_2^2-1)+7a_2^2(a_1^2-1)\right),$$
which is always non-negative. The discriminant is zero only in the case $a_1=a_2=1$, corresponding to the solution $(2,1,1)$ of equation~\eqref{eq:3_5_7}. Furthermore, we see that a rational solution to equation~\eqref{eq:3_5_7_quadratic} exists if and only if
\begin{equation}\label{eq:3_5_7_condition}
5a_1^2(a_2^2-1)+7a_2^2(a_1^2-1)=3N^2,\qquad\text{ for some }N\in\Z_{>0}.
\end{equation}

\subsection{Any rational solution is an integral solution}
Suppose that $\alpha,\beta\in\R$ are the two solutions of equation~\eqref{eq:3_5_7_quadratic}. We obtain:
\begin{align}
\label{eq:3_5_7_sols_are_integer_1}\alpha+\beta&=4a_1a_2,\\
\label{eq:3_5_7_sols_are_integer_2}3\alpha\beta&=5a_1^2+7a_2^2.
\end{align}
In particular, since the right-hand side in each case is a strictly positive integer, we see that $\alpha,\beta>0$. Furthermore, $\alpha$ is rational if and only if $\beta$ is rational. Since we are only interested in rational solutions, we can assume that both $\alpha$ and $\beta$ are rational. Let us write
$$\alpha=\frac{n_1}{m_1}\qquad\text{ and }\qquad\beta=\frac{n_2}{m_2},$$
where the fractions are expressed in their reduced form, i.e.~$\gcd{n_i,m_i}=1$. Then
\begin{align}
\label{eq:3_5_7_rational_is_integer_1}
m_1m_2&\divides 3n_1n_2,\\
\label{eq:3_5_7_rational_is_integer_2}
m_1m_2&\divides n_1m_2+n_2m_1.
\end{align}
By~\eqref{eq:3_5_7_rational_is_integer_2}, $m_2\divides m_1$ and $m_1\divides m_2$, forcing $m_1=m_2$. Without loss of generality, from~\eqref{eq:3_5_7_rational_is_integer_1} we may assume that $m_1\divides 3n_2$ and $m_2\divides n_1$. But then $m_1\divides n_1$, forcing $m_1=m_2=1$. Hence $\alpha,\beta\in\Z_{>0}$.

\subsection{The values $a_1$ and $a_2$ are fixed under one-step mutations}
We now show that, given a solution $(a_0,a_1,a_2)$ such that $\gcd{a_0,a_1,a_2}=1$, the values of $a_1$ and $a_2$ are fixed under one-step mutation. For suppose that
\begin{equation}\label{eq:3_5_7_mutate_about_5}
\frac{(3a_0^2+7a_2^2)^2}{5a_1^2}\in\Z.
\end{equation}
Without loss of generality we may take $\alpha=a_0$. We see that $5\divides 3a_0^2+7a_2^2=3\alpha^2+3\alpha\beta-5a_1^2$ by~\eqref{eq:3_5_7_sols_are_integer_2}, hence $5\divides3\alpha(\alpha+\beta)=12a_0a_1a_2$ by~\eqref{eq:3_5_7_sols_are_integer_1}. Since the weights are pairwise coprime, the only possibility is that $5\divides a_1$. Returning to equation~\eqref{eq:3_5_7_mutate_about_5} we see that $5^2\divides 3a_0^2+7a_2^2$, and proceeding as before we find that $5^2\divides a_1$. Clearly we can repeat this process an arbitrary number of times, increasing the power of $5$ at each step. This is a contradiction. The case when
$$\frac{(3a_0^2+5a_1^2)^2}{7a_2^2}\in\Z$$
is dealt with similarly.

\subsection{An infinite number of components}
Set $a_1=1$ in condition~\eqref{eq:3_5_7_condition}. The condition becomes $a_2^2-1=15M^2$, where $5M=N$. This is a Pell equation, and Emerson~\cite{Eme69} has shown that there exists an infinite number of integer solutions given by a recurrence relation. In this case we see that $a_2^{(n)}$ and $M^{(n)}$ are generated by:
\begin{align*}
a_2^{(0)}&=1,&M^{(0)}&=0,\\
a_2^{(1)}&=4,&M^{(1)}&=1,\\
a_2^{(n+1)}&=8a_2^{(n)}-a_2^{(n-1)},&M^{(n+1)}&=8M^{(n)}-M^{(n-1)}.
\end{align*}
Substituting these expressions back into the original quadratic~\eqref{eq:3_5_7_quadratic} gives:
$$a_0^{(n+1)}=2a_2^{(n)}\pm 5M^{(n)}.$$
These solutions are coprime (since $a_1=1$) and so correspond to well-formed weights. We will focus on the smaller of the two solutions, corresponding to the minimum of the two weights. Substituting the expressions for $a_2^{(n)}$ and $M^{(n)}$ gives:
\begin{align*}
a_0^{(n+1)}&=2a_2^{(n+1)}-5M^{(n+1)}\\
&=8\left(2a_2^{(n)}-5M^{(n)}\right)-\left(2a_2^{(n-1)}-5M^{(n-1)}\right)\\
&=8a_0^{(n)}-a_0^{(n-1)}.
\end{align*}
Hence we obtain the recurrence relation:
\begin{align*}
a_0^{(0)}&=2,\\
a_0^{(1)}&=3,\\
a_0^{(n+1)}&=8a_0^{(n)}-a_0^{(n-1)}.
\end{align*}

\begin{remark}
If instead we insist that $a_2=1$, we obtain the Pell equation $a_1^2-1=21M^2$, where $7M=N$. In this case the recurrence relation is given by:
\begin{align*}
a_1^{(0)}&=1,&M^{(0)}&=0,\\
a_1^{(1)}&=55,&M^{(1)}&=12,\\
a_1^{(n+1)}&=110a_1^{(n)}-a_1^{(n-1)},&M^{(n+1)}&=110M^{(n)}-M^{(n-1)}.
\end{align*}
Proceeding as above we find that
\begin{align*}
a_0^{(0)}&=2,\\
a_0^{(1)}&=26,\\
a_0^{(n+1)}&=110a_0^{(n)}-a_0^{(n-1)}.
\end{align*}
Hence we have a second infinite family of components of $\mathcal{G}$. Notice that these two families do not exhaust all the possibilities: for example, $a_1=5$, $a_2=4$ satisfies condition~\eqref{eq:3_5_7_condition}, giving the two solutions $(1,5,4)$ and $(79,5,4)$.
\end{remark}

\subsection*{Acknowledgments}
Our thanks to Tom Coates, Alessio Corti, and Song Sun for many useful conversations. The authors are supported by EPSRC grant EP/I008128/1.

\providecommand{\bysame}{\leavevmode\hbox to3em{\hrulefill}\thinspace}
\providecommand{\MR}{\relax\ifhmode\unskip\space\fi MR }
\providecommand{\MRhref}[2]{%
  \href{http://www.ams.org/mathscinet-getitem?mr=#1}{#2}
}
\providecommand{\href}[2]{#2}

\end{document}